\numberwithin{equation}{section}
\newtheorem{theorem}{Theorem}[section]
\newtheorem{lemma}[theorem]{Lemma}
\theoremstyle{remark}
\newtheorem{remark}{Remark}[section]
\theoremstyle{definition}
\newtheorem{definition}[theorem]{Definition}
\newcommand{\xx}{\langle x\rangle}
\newcommand{\HH}{{\mathcal{H}}}
\begin{document}%(end)

%%%%%%%%%%%%%%%%%%%%=================FRONTMATTER--------------(fold)
%%%%%%%%%%%%%%%%%%%%   TITLE                                  (fold)
\title%
[unbounded potentials]%
{Smoothing estimates for the Schr\"odinger equation with unbounded
potentials}
%(end)
%%%%%%%%%%%%%%%%%%%%   ABSTRACT                               (fold)
\begin{abstract}
  We prove a local in time smoothing estimate for a magnetic
  Schr\"o\-dinger
  equation with coefficients growing polynomially
  at spatial infinity. The assumptions on the magnetic field
  are gauge invariant and involve only the first two derivatives.
  The proof is based on the multiplier method and no pseudofferential
  techniques are required.
\end{abstract}
%(end)
%%%%%%%%%%%%%%%%%%%%   DATE                                   (fold)
\date{\today}    %%% ''\date{}'' to omit date
%(end)
%%%%%%%%%%%%%%%%%%%%   AUTHOR(S)                              (fold)
\author{Piero D'Ancona}
\address{Piero D'Ancona and Luca Fanelli:
SAPIENZA - Universit\`a di Roma,
Dipartimento di Matematica,
Piazzale A.~Moro 2, I-00185 Roma, Italy}
\email{dancona@mat.uniroma1.it, fanelli@mat.uniroma1.it}
%\urladdr{www.mat.uniroma1.it/people/dancona}

\author{Luca Fanelli}
% \address{Luca Fanelli:
% SAPIENZA Unversit$\grave{\text{a}}$ di Roma, Dipartimento di
% Matematica, Piazzale A.~Moro 2, I-00185 Roma, Italy}
% \email{fanelli@mat.uniroma1.it}

%\thanks{The author was partially supported by
%the Programma Nazionale M.U.R.S.T.
%``Problemi e Metodi nella Teoria
%delle Equazioni Iperboliche.''}
%(end)
%%%%%%%%%%%%%%%%%%%%   SUBJ.CLASS - KEYWORDS                  (fold)
%%%  MATH.SUBJ.CLASSIF.--> http://www.ams.org/msc/
\subjclass[2000]{%AMS Subject Classification: %
%%%%  35Axx GENERAL THEORY
%35A05, %  General existence and uniqueness theorems
%35A07, %  Local existence and uniqueness theorems
%35A08, %  Fundamental solutions
%35A10, %  Cauchy-Kovalevskaya theorems
%35A15, %  Variational methods
%35A17, %  Parametrices
%35A18, %  Wave front sets
%35A20, %  Analytic methods, singularities
%35A21, %  Propagation of singularities
%35A22, %  Transform methods (e.g. integral transforms)
%35A25, %  Other special methods
%35A27, %  Microloc. methods; sheaf th. hom. algebra in PDE
%35A30, %  Geom. theory, characteristics, transformations
%35A35, %  Theoretical approximation to solutions
%35A40, %  Numerical approximation to solutions
%35A99, %  None of the above, but in this section
%%%%  35L PARTIAL DIFFERENTIAL EQUATIONS OF HYP TYPE
%35L05, %  Wave equation
%35L10, %  General theory of second-order, hyp eqs
%35L15, %  Initial value pbs for 2ndo hyp eqs
%35L20, %  Boundary value pbs for second-order, hyp eqs
%35L25, %  General theory of higher-order, hyp eqs
%35L30, %  Initial value pbs for higher-order, hyp eqs
%35L35, %  Boundary value pbs for higher-order, hyp eqs
%35L40, %  General th. of hyp systems of first-order PDE
%35L45, %  Initial value pbs for hyp systems of 1sto PDE
%35L50, %  Boundary value pbs for hyp systems of 1sto PDE
%35L55, %  hyp. systems of higher-order PDE
%35L60, %  Nonlinear first-order PDE of hyp type
%35L65, %  Conservation laws
%35L67, %  Shocks and singularities
35L70, %  Nonlinear second-order PDE of hyp type
%35L75, %  Nonlinear hyp. PDE of higher order
%35L80, %  hyp PDE of degenerate type
%35L82, %  Pseudohyp equations
%35L85, %  Unilateral problems; variat. ineq. for hyp PDE
%35L90, %  Abstract hyp evolution equations
%%%%  35Qxx, EQS OF MATH.PHYS. & OTHER AREAS OF APPL.
%35Q40, % Eqs from quantum mechanics
%35Q55, % NLS-like (nonlinear Schroedinger) equations
%%%%  58Jxx, PDE ON MANIFOLDS; DIFF. OPERATORS
%58J05, % Elliptic equations on manifolds, general theory
%58J32, % Boundary value problems on manifolds
%58J35, % Heat and other parabolic equation methods
%58J37, % Perturbations; asymptotics
%58J40, % Pseudodiff. and FIOs on manifolds
58J45%, % Hyperbolic equations
%58J47, % Propagation of singularities; initial value pbs.
%58J50, % Spectral pbs; spectral geom; scattering th.
}
\keywords{%
%Wave maps.
%solution operator,
%nonlinear equations,
%hyperbolic equations,
%ill-posed problems,
%wave equation,
%resolvent estimates,
%Cauchy problem,
%Strichartz estimates,
decay estimates,
dispersive equations,
Schr\"odinger equation,
time-dependent potential,
magnetic potential
}
\maketitle
%(end)
%%%%%%%%%%%%%%%%%%%%   TOC                                    (fold)
%\tableofcontents
%(end)
%------------------------------------end of frontmatter (end)

%%%%%%%%%%%%%%%%%%%%=================BODY---------------------(fold)

\section{Introduction}\label{sec:intro} %(fold)

Smoothing properties of dispersive equations have become a standard
tool in the study of nonlinear problems.
For the Schr\"odinger flow
on $\mathbb{R}^{n}$ the basic smoothing estimate is the following:
\begin{equation}\label{eq:smoo0}
  \|\xx^{-s}|D|^{1/2}e^{it \Delta}f\|_{L^{2}L^{2}}\lesssim
  \|f\|_{L^{2}},\qquad s>1/2.
\end{equation}
Here as usual the symbol $A\lesssim B$ means
$A\le CB$ for some absolute constant $C$, $\xx=(1+|x|^{2})^{1/2}$
and $|D|^{r}f=\mathcal{F}^{-1}(|\xi|^{r}\widehat{f}(\xi))$.
With $L^{2}L^{2}$ we denote the space
$L^{2}(\mathbb{R}_{t};L^{2}(\mathbb{R}^{n}_{x}))$.

In the form \eqref{eq:smoo0} the estimate was proved by
Ben-Artzi and Klainerman
\cite{Ben-ArtziKlainerman92-a}
and Chihara
\cite{Chihara02-a},
but it can be traced back at least
as far as the work of Kato on $H$-smoothing
\cite{Kato65-a}
and subsequent works of Kato-Yajima, Vega, Sj\"olin,
Constantin-Saut
\cite{KatoYajima89-a},
\cite{Vega88-a},
\cite{Sjolin87-a},
\cite{ConstantinSaut89-a}.
In view of its importance, especially for the applications
to the derivative NLS, it has been extended and improved in a
variety of directions
(see e.g~
\cite{Watanabe91-a},
\cite{KenigPonceVega91-a},
\cite{Walther99-a},
\cite{RuzhanskySugimoto06-a}).
We recall also the close
connection of this property with the Morawetz estimates
for the wave and Klein-Gordon equation, which play a central
role in scattering theory.
The gain of $1/2$ derivative, at least
on a \emph{bounded} time interval $[-T,T]$, is a quite
general phenomenon, extending to Schr\"odinger equations on manifolds
and with variable coefficients. In these general situations, it is
well known that smoothing holds as long as the metric has no trapped
rays.

A more precise way to express smoothing is using
a Morrey-Campanato type norm:
\begin{equation}\label{eq:smoo1}
  \sup_{R>0}\frac1R\int_{-\infty}^{+\infty}dt
  \int_{|x|\le R}|\nabla e^{it \Delta}f|^{2}\,dx\le
  C\|f\|_{\dot H^{1/2}}
\end{equation}
(see
\cite{ConstantinSaut89-a},
\cite{Sjolin87-a},
\cite{PerthameVega99-a}).
This stronger form of \eqref{eq:smoo0} can be proved by a variant of
Morawetz' multiplier method; more general pseudodifferential
techniques allow only to prove smoothing in the form \eqref{eq:smoo0}.

In the following we shall focus on the variable coefficient problem
on $\mathbb{R}_{t}\times \mathbb{R}^{n}_{x}$
\begin{equation}\label{eq:schro}
  \begin{split}
    &iu_t(t,x)-\left(\nabla-iA(t,x)\right)^2u+V(t,x)u(t,x)=0
    \\
    &u(0,x)=f(x),
  \end{split}
\end{equation}
for suitable potentials $A(t,x)\in\mathbb{R}^n$ and
$V(t,x)\in\mathbb{R}$, $n\geq3$. For this equation, in general,
one can only expect local (in time) smoothing,
where the $L^{2}L^{2}$ space is replaced by
\begin{equation*}
  L^{2}_{T}L^{2}=L^{2}([-T,T];L^{2}(\mathbb{R}^{n})),\qquad
  T>0.
\end{equation*}
This was proved by Yajima \cite{Yajima91-a} for
smooth potentials $V(t,x)$ with subquadratic growth and magnetic
potentials $A(t,x)$ with sublinear growth. This result was further
extended by Doi \cite{Doi94-a} to equations of the form
\begin{equation}\label{eq:doi}
  iu_t(t,x)-\sum(D_{j}-iA_{j}(t,x))g^{jk}(x)(D_{k}-iA_{k}(t,x))u
  +V(t,x)u(t,x)=0
\end{equation}
under suitable assumptions on the metric $g^{jk}(x)$, namely a
nontrapping condition, sufficient flatness at spatial infinity,
and uniform ellipticity
(see \cite{Doi94-a}, \cite{Doi05-a}).

It has been known for some time that the quadratic growth represents
a critical threshold for potentials. Indeed,
the fundamental solution of the Schr\"odinger propagator
corresponding to $-\Delta+V(x)$ with $V(x) \gtrsim\xx^{2+\delta} $
is nowhere $C^{1}$ and can be unbounded at infinity
\cite{MartinezYajima01-a}. This reflects in a \emph{weaker} smoothing
property of the solution; Yajima and Zhang (\cite{YajimaZhang02-b},
\cite{YajimaZhang04-a}; see also \cite{Yajima04-c})
obtained for the operator
$H=-\Delta+V(x)$, with a smooth potential
$V(x)\simeq\xx^{m}$, $m\ge2$, the estimate
\begin{equation}\label{eq:smoom}
  \int_{-T}^{T}\int_{|x|\le R}
  \left|\left\langle D\right\rangle^{\frac1m}e^{itH}f\right|^{2}\,dx\,dt
  \le C_{T,R}\|f\|_{L^{2}}.
\end{equation}
The result is sharp, in the sense that
the analogous estimate with $1/m$ replaced by $s>1/m$ is false.
More recently, Robbiano and Zuily \cite{RobbianoZuily06-a}
extended \eqref{eq:smoom} to general equations of the form \eqref{eq:doi},
with $C^{\infty}$ potentials in suitable symbol classes;
as in Doi's result, the
metric must be non trapping and sufficiently
flat at infinity, moreover the electric potential $V$ can grow at most like
$\xx^{m}$ and the magnetic potential $A$ can grow at most like
$\xx^{m/2}$, with corresponding conditions on all derivatives.

All the results mentioned so far are based on pseudodifferential
techniques. These allow to handle operators of a very general
form, but with some drawbacks:
\begin{itemize}
  \item The coefficient are required to be $C^{\infty}$,
  with conditions involving all the derivatives;
  this could probably be improved to assumptions involving a
  large enough number of derivatives.
  \item A more relevant problem is that these methods
  hide some important physical aspects; indeed, the
  assumptions on the magnetic terms are expressed in terms of
  the vector potential $A(t,x)$, while for example, in dimension
  $n=3$, the physically relevant
  quantity is the vector field $B=\text{curl}\,A$. In particular,
  the assumptions are not gauge invariant.
  \item A precise estimate like \eqref{eq:smoo1}
  for the Morrey-Campanato norm of the solution seems difficult
  to obtain uniquely by pseudodifferential methods.
\end{itemize}

Our goal here is to follow a different path and
adapt the method of multipliers to handle unbounded
potentials. Indeed, by elementary methods, we can prove a
Morrey-Campanato equivalent of \eqref{eq:smoom}, and address
at the same time some of the problems listed above.
In the present work
we shall only focus on equations of the form \eqref{eq:schro};
note that in order to study a general metric
by the multiplier method,
it is necessary to exhibit a `physical space' replacement for
the non trapping condition. This is an interesting problem in
itself and will be the subject of future work.

We shall express our assumptions on the magnetic field in
terms of $\text{curl}A$, which has the following standard extension
to general space dimension:

\begin{definition}\label{def.B}
  For any $n\geq2$ the matrix-valued
  field $B:\mathbb{R}^n\to\mathcal
  M_{n\times n}(\mathbb{R})$ is defined by
  \begin{equation*}
    B:=DA-DA^t,
    \qquad
    B_{ij}=\frac{\partial A^i}{\partial x^j}-\frac{\partial
    A^j}{\partial x^i}.
  \end{equation*}
  We also define the vector field
  $B_\tau:\mathbb{R}^n\to\mathbb{R}^n$ as follows:
  \begin{equation*}
    B_\tau=\frac{x}{|x|}B.
  \end{equation*}
\end{definition}

Of course we can rephrase the definition as
$B=dA$ with $A=\sum_{j}A^{j}dx^{j}$;
in dimension $n=3$, this reduces to
$B=\text{curl}\,A$, more precisely
  \begin{equation*}
    Bv=\text{curl}\,A\wedge v,
    \qquad
    \forall v\in\mathbb{R}^3.
  \end{equation*}
In particular, we have
  \begin{equation}\label{eq.curl}
    B_\tau=\frac{x}{|x|}\wedge\text{curl}\,A,
    \qquad n=3.
  \end{equation}
Hence $B_\tau(x)$ is the projection of $B=\text{curl}\,A$ on the
tangential space in $x$ to the sphere of radius $|x|$, for $n=3$.
Observe also that $B_\tau\cdot x=0$ for any $n\geq2$, hence $B_\tau$
is a tangential vector field in any dimension. Notice that our assumptions
on the magnetic field involve $B_{\tau}$ exclusively
(see \eqref{eq:ass2}) and hence are gauge invariant.

Our main tool will be the following (with
the notation $\nabla_{A}=\nabla-iA(t,x)$):

\textsc{Magnetic Virial Identity.} \textit{Let $u(t,x)$ be a
solution of \eqref{eq:schro}, $\phi=\phi(|x|)$ a smooth, radial, real
valued function and let $\Theta(t)=\int\phi|u|^2\,dx$.
Denoting with $V_r$ the radial derivative of $V$, $D^2\phi$ the
Hessian matrix and with $\Delta^2\phi=\Delta \Delta\phi$ the
bilaplacian of $\phi$, we have
\begin{equation}\label{eq:identity}
  \begin{split}
    4\int_{\mathbb{R}^n} &\nabla_AuD^2\phi\overline{\nabla_Au}\,dx
    -\int_{\mathbb{R}^n}|u|^2\Delta^2\phi\,dx
     -2\int_{\mathbb{R}^n}|u|^2\phi'V_r\,dx
    \\
    &+4\int_{\mathbb{R}^n}u\phi'B_\tau\cdot\overline{\nabla_Au}\,dx  = \frac{d}{dt}
    \Im\int_{\mathbb{R}^n}\overline u\ \nabla_Au\cdot\nabla\phi\,dx
    = \ddot\Theta(t).
  \end{split}
\end{equation}
}

We give a proof of \eqref{eq:identity} in Section
\ref{sec:virial} for sufficiently smooth ($H^{3/2}$)
solutions, by a variant of
the classical Morawetz multiplier method. This approach has a long
history, starting with \cite{Morawetz68-a}
for the Klein-Gordon equation, \cite{Morawetz75-a},
\cite{Strauss75-a},
\cite{LionsPerthame92-a}; then the multiplier method was extended
to the Helmoltz and wave equations in \cite{PerthameVega99-a},
and for the
Schr\"odinger equation with an electric potential in
\cite{BarceloRuizVega97-a}, \cite{BarceloRuizVegaVilela-a}.
In the case of magnetic potentials, a 3D version of the
virial identity for Schr\"odinger first appeared in
\cite{Goncalves-Ribeiro91-b}, \cite{Goncalves-Ribeiro91-a}, while in
\cite{FanelliVega08-a} identity \eqref{eq:identity} is proved for
any dimension.

In order to apply the formal identity \eqref{eq:identity}
we shall need the following assumptions: the functions
$V(t,x)\in C^{1}$ and $A(t,x)=(A_{1},\dots,A_{n})\in C^{2}$ are
real valued, and for some constants $C,c>0$ and some $m\ge2$,
\begin{equation}\label{eq:ass1}
  c\xx^{m}\le V(t,x)\le C\xx^{m}, \qquad m\ge2;
\end{equation}
(but see Remark \ref{rem:gauge} below). Moreover we shall
assume that for some $m/2\le \lambda\le m-1$,
\begin{equation}\label{eq:ass2}
  (\partial_{r}V)^{+}\le C\xx^{m-1}\qquad
  |\nabla \cdot B_{\tau}|\le C\xx^{\lambda},\qquad
  |B_{\tau}|\le C\xx^{\lambda-m/2}
\end{equation}
where $(\partial_{r}V)^{+}$ is the positive part of the radial
derivative $\partial_{r}V$.

Recall that for superquadratic, time dependent potentials,
the existence of the propagator is still partially an
open question.
Hence we prefer to add an abstract, albeit very
natural, assumption concerning the well-posedness of the Cauchy
problem \eqref{eq:schro}:

\textbf{Assumption (H): well posedness.}
\textit{For each $t\in[-T,T]$, the operator
\begin{equation}\label{eq:opH}
  H(t)=-\left(\nabla-iA(t,x)\right)^2+V(t,x)
\end{equation}
is essentially selfadjoint on $C^{\infty}_{0}$,
with maximal domain $D(H(t))=D(H)$
independent of $t$; we shall use the notation
\begin{equation}\label{eq:HHs}
  \HH^{s}=D(H(t)^{s/2}),\qquad 0\le s\le2.
\end{equation}
Moreover, we assume that for each $f\in L^{2}$ problem
\eqref{eq:schro} has a solution $u\in C([-T,T],L^{2})$, which is
in $u\in C([-T,T],\HH^{1})$ for $f\in\HH^{1}$, and satisfies the
estimates
\begin{equation}\label{eq:est}
  \|u(t)\|_{L^{2}}\le C_{T}\|f\|_{L^{2}},\qquad
  \|u(t)\|_{\HH^{1}}\le C_{T}\|f\|_{\HH^{1}},\qquad
  t\in[-T,T].
\end{equation}
Finally, we assume that for $C^{\infty}_{0}$ data the solution
is at least in $C([-T,T],\HH^{3/2})$.
}

Notice that if $V,A$ do not depend on time, Assumption (H)
is trivially satisfied as soon as the operator $H$
is selfadjoint. As for the general case of superquadratic,
time dependent potentials, the optimal conditions for
well posedness are not clear. Some partial
results in this direction have been obtained by
Yajima in \cite{Yajima08-a},
where a propagator is constructed under condition slightly
more restrictive than \eqref{eq:ass1}, \eqref{eq:ass2}
(in particular, quadratic bounds for $\partial_{t}V,\partial_{t}A$
are required).

In the classical Morawetz estimates the tangential component
of $\nabla u$ satisfies better estimates than the
full gradient. A similar phenomenon occurs in presence of
a magnetic potential; we need to define here
the \emph{modified radial} and \emph{tangential derivatives} of $u$ as
\begin{equation}\label{eq:radtan}
  \nabla^{R}_{A}u=\frac{x}{|x|}\cdot \nabla_{A} u,\qquad
  \nabla^{T}_{A}u=\nabla_{A}u-\frac{x}{|x|}\nabla^{R}_{A}u
\end{equation}
with $\nabla_{A}=\nabla-iA(t,x)$, so that
\begin{equation}\label{eq:tan}
  |\nabla^{T}_{A}u|^{2}=
  \sum_{j<k}
  \left|\frac{x_{j}}{r}(\partial_{k}-iA_{k})u
  -\frac{x_{k}}{r}(\partial_{j}-iA_{j})u\right|^{2}.
\end{equation}
Notice that
\begin{equation*}
  |\nabla u|^{2}=|\nabla^{R}_{A}u|^{2}+|\nabla^{T}_{A}u|^{2}
\end{equation*}
and indeed $\nabla^{T}_{A}u$ reduces to the usual tangential
derivative when $A\equiv0$.

We are in position to state the main result of the paper:

\begin{theorem}\label{the:smoo}
  Let $n\ge3$, and assume that \eqref{eq:ass1}, \eqref{eq:ass2} and (H)
  hold for some $T>0$.
  Then for all data $f\in\HH^{1-1/m}$ the solution $u(t,x)$ of
  problem \eqref{eq:schro} satisfies for all $R>0$, with
  a constant $C$ independent of $R$,
  the following smoothing estimates:
  when $n\ge4$
  \begin{equation}\label{eq:smoo4D}
        \int_{-T}^{T}\int
        \left[
          \frac{R^{n-1}|\nabla_{A}u|^{2}}{(R \vee r)^{n}}
          +\frac{|\nabla^{T}_{A}u|^{2}}{r}
          +\frac{|u|^{2}}{r^{3}}
        \right]dxdt
         +\frac{1}{R^{2}}\int_{-T}^{T}\int_{|x|=R}|u|^{2}d \sigma dt
         \le
         C \|f\|^{2}_{\HH^{1-\frac1m}}
  \end{equation}
  while for $n=3$
  \begin{equation}\label{eq:smoo3D}
        \int_{-T}^{T}dt\int
        \left[
          \frac{R^{2}|\nabla_{A}u|^{2}}{(R \vee r)^{3}}
          +\frac{|\nabla^{T}_{A}u|^{2}}{r}
        \right]dx
         +\frac{1}{R^{2}}\int_{-T}^{T}dt\int_{|x|\le R}|u|^{2}d \sigma
         \le
         C \|f\|^{2}_{\HH^{1-\frac1m}}.
  \end{equation}
  If in addition we assume that $V$ is repulsive,
  i.e., $V_{r}\le0$, we can improve the above estimate bu replacing
  the $\HH^{1-1/m}$ norm at the right hand side with $\HH^{\lambda/m}$.
\end{theorem}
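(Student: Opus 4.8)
The plan is to use the Magnetic Virial Identity \eqref{eq:identity} with a carefully chosen radial multiplier $\phi=\phi(r)$, and then integrate in time over $[-T,T]$. The left-hand side of \eqref{eq:identity} must be shown to dominate the full collection of good terms appearing in \eqref{eq:smoo4D}--\eqref{eq:smoo3D}, while the right-hand side $\ddot\Theta(t)$ integrates to a boundary term $\Im\int\overline u\,\nabla_A u\cdot\nabla\phi\,dx\big|_{-T}^{T}$ that is controlled, via \eqref{eq:est} and the Cauchy--Schwarz inequality, by $\|f\|_{L^2}\|f\|_{\HH^1}\lesssim\|f\|_{\HH^{1-1/m}}\|f\|_{\HH^{1}}$ — and here is where the precise regularity index $1-1/m$ enters: one interpolates, or more likely splits $f$ into low/high frequency pieces relative to the operator $H$, balancing the two factors to land at the symmetric exponent $1-\tfrac1m$ after using the scaling gain $\xx^{-m/2}$ hidden in the magnetic/potential structure.

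First I would fix, for each dyadic scale, the standard Morawetz-type weight: $\phi'(r)$ behaving like $1$ for $r\le R$ and like $R^{n-1}/r^{n-1}$ (resp.\ $R^2/r^2$ when $n=3$) for $r\ge R$, so that $D^2\phi\ge 0$ produces the radial piece $R^{n-1}|\nabla_A^R u|^2/(R\vee r)^n$ and, through the tangential part $\phi'(r)/r\cdot|\nabla_A^T u|^2$, the term $|\nabla_A^T u|^2/r$ on $r\le R$; the choice $\phi''\ge c>0$ near the origin together with a correction to $\phi$ linear at infinity gives the localized full-gradient term. The bilaplacian term $-\int|u|^2\Delta^2\phi$ must be arranged to be \emph{positive}: for $n\ge4$ the classical computation yields the Hardy-type term $|u|^2/r^3$ plus the sphere term $R^{-2}\int_{|x|=R}|u|^2$, whereas for $n=3$ the distributional identity $\Delta^2|x|=-8\pi\delta$ (or the corresponding singular contribution for the regularized weight) produces instead $R^{-2}\int_{|x|\le R}|u|^2\,d\sigma$, which explains the structural difference between \eqref{eq:smoo4D} and \eqref{eq:smoo3D}.

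Next comes the treatment of the two genuinely new, potentially bad, terms: the electric term $-2\int|u|^2\phi' V_r$ and the magnetic term $4\int u\,\phi' B_\tau\cdot\overline{\nabla_A u}$. For the electric term, only the positive part of $V_r$ hurts, and by \eqref{eq:ass2} it is $\lesssim\xx^{m-1}$; since $\phi'\lesssim 1$ this term is absorbed either trivially when $V$ is repulsive ($V_r\le0$, giving the sharper $\HH^{\lambda/m}$ statement) or, in general, by moving it to the right-hand side and controlling $\int_{-T}^{T}\int\xx^{m-1}|u|^2$ — which costs essentially one power of $H^{1/2}$ less than $\|u\|_{\HH^1}^2$, i.e.\ precisely the $1/m$ discrepancy, after using $V\gtrsim\xx^m$ from \eqref{eq:ass1} to bound $\int\xx^m|u|^2\lesssim\|u\|_{\HH^1}^2$ and interpolating. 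The magnetic term is handled by Cauchy--Schwarz: $|4\int u\phi' B_\tau\cdot\overline{\nabla_A u}|\le \varepsilon\int\phi'|\nabla_A u|^2 + \varepsilon^{-1}\int\phi'|B_\tau|^2|u|^2$; the first piece is absorbed into the good gradient term, and for the second one uses $|B_\tau|\le C\xx^{\lambda-m/2}$ with $\lambda\le m-1$, so $|B_\tau|^2\lesssim\xx^{2\lambda-m}\lesssim\xx^{m-2}$, again an absorbable/interpolable weight.

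The main obstacle I anticipate is the bookkeeping that converts the weighted $L^2$ bounds $\int_{-T}^{T}\int\xx^{m-2}|u|^2$ and $\int_{-T}^{T}\int\xx^{m-1}|u|^2$ into the correct right-hand side $\|f\|_{\HH^{1-1/m}}^2$ with a constant uniform in $R$. The clean way is a bootstrap/Gronwall argument in $T$: one proves the estimate first with right-hand side $C(\|f\|_{\HH^{1-1/m}}^2 + \int_{-T}^{T}\|\xx^{(m-1)/2}u\|_{L^2}^2\,dt)$, then observes $\|\xx^{(m-1)/2}u\|_{L^2}^2\le \|\xx^{m/2}u\|_{L^2}^{2(m-1)/m}\|u\|_{L^2}^{2/m}\lesssim \|u\|_{\HH^1}^{2(m-1)/m}\|u\|_{L^2}^{2/m}$, and finally an energy-type identity (differentiating $\|u(t)\|_{\HH^1}^2$, or rather a suitable modification, and using that $H$ commutes with the flow) shows this is integrable against a constant times $\|f\|_{\HH^{1-1/m}}^2$; in the repulsive case the electric term drops, the magnetic term alone needs only $\lambda/m$ derivatives, and the argument simplifies to give directly $\|f\|_{\HH^{\lambda/m}}^2$. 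Making all absorptions legitimate for merely $H^{3/2}$ solutions, and then passing to general $f\in\HH^{1-1/m}$ by density using \eqref{eq:est}, is the remaining technical point.
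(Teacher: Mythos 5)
Your overall architecture is the paper's: the magnetic virial identity with a Morawetz-type radial weight whose Hessian is positive and whose bilaplacian is negative (with the $n=3$ versus $n\ge4$ dichotomy you correctly identify), followed by control of the electric term, the magnetic term, and the time-boundary term. However, there is a recurring and genuine gap in how you produce the fractional regularity $\HH^{1-1/m}$ (and $\HH^{\lambda/m}$, $\HH^{1/2}$): in each of the three problematic terms you apply Cauchy--Schwarz \emph{first} and then try to ``interpolate'' or ``balance'' norms of the fixed function $f$. This yields products such as $\|f\|_{L^{2}}\|f\|_{\HH^{1}}$ for the boundary term, or $\|u\|_{\HH^{1}}^{2(m-1)/m}\|u\|_{L^{2}}^{2/m}$ for the electric term. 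Such products are \emph{not} bounded by $\|f\|_{\HH^{1-1/m}}^{2}$ (take $f$ with a small high-frequency component: $\|f\|_{\HH^{1-1/m}}$ stays bounded while $\|f\|_{L^{2}}\|f\|_{\HH^{1}}$ blows up), they require $f\in\HH^{1}$, and the defect cannot be repaired by density. What is actually needed is interpolation of \emph{operators and bilinear forms}, not of norms of a fixed $f$: (i) the weight bound $\|\xx^{\mu}v\|_{L^{2}}\lesssim\|v\|_{\HH^{2\mu/m}}$ obtained by interpolating the map $v\mapsto\xx^{\mu}v$ between $L^{2}\to L^{2}$ and $\HH^{1}\to L^{2}$ (the latter from $V\gtrsim\xx^{m}$); and (ii) for the boundary and magnetic terms, a bilinear interpolation of $T(f,g)=\int\overline f\,\nabla_{A}g\cdot\nabla\phi\,(F)\,dx$ between the direct estimate $|T(f,g)|\lesssim\|f\|_{L^{2}}\|g\|_{\HH^{1}}$ and the \emph{transposed} estimate $|T(f,g)|\lesssim\|f\|_{\HH^{1}}\|g\|_{L^{2}}$, the latter obtained by integrating by parts and invoking the magnetic Hardy inequality to absorb the $\Delta\phi\sim 1/|x|$ singularity. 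Your proposal never produces the transposed endpoint, which is the essential new input.

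A second concrete failure is your treatment of the magnetic term by
\begin{equation*}
\Bigl|4\int u\,\phi'B_{\tau}\cdot\overline{\nabla_{A}u}\Bigr|\le\varepsilon\int\phi'|\nabla_{A}u|^{2}+\varepsilon^{-1}\int\phi'|B_{\tau}|^{2}|u|^{2},
\end{equation*}
with absorption of the first piece into the left-hand side. This cannot work: $\phi'\ge M>0$ everywhere, whereas the good gradient term produced by the Hessian carries the weight $R^{n-1}/(R\vee r)^{n}$, which is $O(1/R)$ for $r\le R$ and decays like $r^{-n}$ at infinity. There is no $\varepsilon$ for which $\varepsilon\int\phi'|\nabla_{A}u|^{2}$ is dominated by the available left-hand side, and estimating it by $\varepsilon\|u\|_{\HH^{1}}^{2}$ again overshoots the claimed regularity. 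The paper instead treats the whole magnetic term as a bilinear form with $F=B_{\tau}$ and uses the decay $|B_{\tau}|\le C\xx^{\lambda-m/2}$ together with the two-endpoint bilinear interpolation above to land at $\HH^{\lambda/m}\times\HH^{\lambda/m}$. Your electric-term discussion and the closing Gronwall/bootstrap are then unnecessary once the correct operator interpolation and the energy bounds \eqref{eq:est} are in place.
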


\begin{remark}\label{rem:gauge}
  In assumption \eqref{eq:ass1} we require a growth condition on $V$
  from below; this was one of the original assumptions of Yajima-Zhang
  \cite{YajimaZhang04-a} for $V=V(x)$, and was relaxed to
  \begin{equation}\label{eq:ass3}
    -C\xx^{m}\le V(t,x)\le C\xx^{m}
  \end{equation}
  (plus the corresponding ones for all derivatives
  $\partial_{x}^{\alpha} V$)
  in Robbiano-Zuily \cite{RobbianoZuily06-a}.
  We prefer to keep here this quite restrictive condition, since it
  makes it easier to deal with the spaces $\HH^{s}$ used in the
  statement of our result.
  Actually, we can reduce any potential satisfying \eqref{eq:ass3}
  to our situation by applying
  the time-dependent change of gauge
  \begin{equation}\label{eq:gauge}
    u(t,x)=e^{-i c_{0}t\xx^{m}}w(t,x)
  \end{equation}
  which transforms the equation into
  \begin{equation}\label{eq:newschro}
    iw_t(t,x)-(\nabla-i\widetilde{A}(t,x))^2w
    +\widetilde{V}(t,x)w(t,x)=0
  \end{equation}
  with
  \begin{equation}\label{eq:newpot}
    \widetilde{V}=V+c_{0}\xx^{m},\qquad
    \widetilde{A}=A+c_{0}\nabla\xx^{m}\cdot t.
  \end{equation}
  It is easy to check that the other assumptions remain true,
  with different constants; notice in particular that
  the field $B$ is unchanged.
\end{remark}

% end of section Introduction (end)

\section{Proof of the magnetic virial identity}\label{sec:virial}  %(fold)

Let $u\in\mathcal H^{\frac32}$ be  a solution of
\eqref{eq:schro}. Recall that the quantity $\Theta_S(t) $
is defined as
\begin{equation*}
\Theta_S(t) =\int\phi|u(t,x)|^{2}dx
\end{equation*}
where the radial weight function $\phi$ will be
chosen in the following.
Writing equation \eqref{eq:schro} in the form
\begin{equation}\label{eq.schro2}
  u_t=-iHu,
\end{equation}
we obtain immediately
\begin{equation}
  \dot\Theta_S(t)  =-i\left\langle u,[H,\phi]u\right\rangle,\qquad
  \ddot\Theta_S(t)  =\left\langle u,[H,[H,\phi]]u\right\rangle,
  \label{eq.teta1S}
\end{equation}
where the brackets $[,]$ are the commutator and the brackets
$\langle,\rangle$ are the hermitian product in $L^2$. In order to
simplify the notations, we shall write
\begin{equation}\label{eq.T}
  T=-[H,\phi].
\end{equation}
By the Leibnitz formula
\begin{equation}\label{eq.leib}
  \nabla_A(fg)=g\nabla_Af+f\nabla g,
\end{equation}
which implies
\begin{equation}\label{eq.leibH}
  H(fg)=(Hf)g+2\nabla_Af\cdot\nabla g+f(\Delta g),
\end{equation}
we can write explicitly
\begin{equation}\label{eq.T2}
  T=2\nabla\phi\cdot\nabla_A+\Delta\phi.
\end{equation}
Observe that $T$ is anti-symmetric, namely
\begin{equation*}
  \langle f,Tg\rangle=-\langle Tf,g\rangle.
\end{equation*}
Hence we can rewrite \eqref{eq.teta1S} in the following form
\begin{equation}\label{eq.2teta2S}
  \ddot\Theta_S(t)=\left\langle u,[H,T]u\right\rangle,
\end{equation}
where $T$ is given by \eqref{eq.T2}.

In the following we shall use the shorthand notations,
for a function $f:\mathbb{R}^n\to\mathbb{C}$,
\begin{equation*}
  f_j =\frac{\partial f}{\partial x^j}, \qquad
  f_{\widetilde j}  =f_j-iA^jf, \qquad
  f_{\widetilde j^\star}  =f_j+iA^jf.
\end{equation*}
With these notations we have
\begin{equation*}
  (fg)_{\widetilde j}=f_{\widetilde j}g+fg_j
\end{equation*}
while the integrations by parts formula can be written
\begin{equation*}
  \int_{\mathbb{R}^n}f_{\widetilde j}(x)g(x)\,dx=-\int_{\mathbb{R}^n}f(x)g_{\widetilde
  j^\star}(x)\,dx.
\end{equation*}

We now compute explicitly the commutator $[H,T]$; by \eqref{eq.T2}
we have
\begin{equation}\label{eq.HT}
  [H,T]=-[\nabla_A^2,2\nabla\phi\cdot\nabla_A]
  -[\nabla_A^2,\Delta\phi]+[V,T]=:I+II+III.
\end{equation}
The term $III$ is easy:
\begin{equation}\label{eq.nuovaV}
  III=[V,T]=2[V,\nabla_A\cdot\nabla]=-2\nabla\phi\cdot\nabla V=-2\phi'V_r.
\end{equation}
As to $I$, we have
\begin{equation}
  \begin{split}
  -I = & 2\sum_{j,k=1}^n\left(\partial_{\widetilde j}\partial_{\widetilde j}\phi_k\partial_{\widetilde k}
  -\phi_k\partial_{\widetilde k}\partial_{\widetilde j}\partial_{\widetilde j}\right)
  \\
  = & \sum_{j,k=1}^n\left(2\phi_{kjj}\partial_{\widetilde k}+4\phi_{jk}
  \partial_{\widetilde j}\partial_{\widetilde k}+2\phi_k(\partial_{\widetilde j}\partial_{\widetilde j}
  \partial_{\widetilde k}-\partial_{\widetilde k}\partial_{\widetilde j}\partial_{\widetilde
  j})\right).
  \end{split}
  \label{eq.Iuno}
\end{equation}
Notice that
\begin{equation*}
  \begin{split}
  \partial_{\widetilde j}\partial_{\widetilde k}-\partial_{\widetilde k}\partial_{\widetilde j}= &
  i\left(A^j_k-A^k_j\right),
  \\
  \partial_{\widetilde j}\partial_{\widetilde j}\partial_{\widetilde k}
  -\partial_{\widetilde k}\partial_{\widetilde j}\partial_{\widetilde
  j} = &
  i\left(A^k_j-A^j_k\right)_j+2i\left(A^k_j-A^j_k\right)\partial_{\widetilde
  j};
  \end{split}
\end{equation*}
hence, by \eqref{eq.Iuno} we obtain
\begin{equation}\label{eq.Idue}
  -I=\sum_{j,k=1}^n\left(2\phi_{kjj}\partial_{\widetilde k}+4\phi_{jk}
  \partial_{\widetilde j}
  \partial_{\widetilde k}+2i\phi_j
  \left(A^j_k-A^k_j\right)_k+4i\phi_j
  \left(A^j_k-A^k_j\right)\partial_{\widetilde
  k}\right)
\end{equation}

The term $II$ can be written
\begin{equation}
  \begin{split}
  -II & =
  \sum_{j,k=1}^n\left(\partial_{\widetilde k}\partial_{\widetilde k}\phi_{jj}-
  \phi_{jj}\partial_{\widetilde k}\partial_{\widetilde k}\right)
  \\
  & =\sum_{j,k=1}^n\left(\phi_{jjkk}+2\phi_{jjk}\partial_{\widetilde k}\right).
  \end{split}
  \label{eq.II}
\end{equation}
By \eqref{eq.Idue} and \eqref{eq.II} we have
\begin{equation}
  \begin{split}
  \langle u,[\nabla_A^2,T]u\rangle = &
  \sum_{j,k=1}^n\int_{\mathbb{R}^n}\left(2u\phi_{kjj}\overline{u_{\widetilde k}}
  +4u\phi_{jk}\overline{\partial_{\widetilde j}\partial_{\widetilde k}u}
  +2u\phi_{kjj}\overline{u_{\widetilde k}}\right)\,dx
  \\
  & +\sum_{j,k=1}^n\int_{\mathbb{R}^n}\left(2i\phi_j\left(A^j_k-A^k_j\right)_k|u|^2
  +4iu\phi_j\left(A^j_k-A^k_j\right)\overline{u_{\widetilde
  k}}\right)\,dx
  \\
  & +\int_{\mathbb{R}^n}|u|^2\Delta^2\phi\,dx.
  \end{split}
  \label{eq.III}
\end{equation}
Using the identity
\begin{equation*}
  \overline{\partial_{\widetilde j}\partial_{\widetilde k}u}=
  \partial_{\widetilde j^\star}\partial_{\widetilde
  k^\star}\overline u
\end{equation*}
integrating by parts the first three terms of
\eqref{eq.III} we have
\begin{equation}
  \begin{split}
  \sum_{j,k=1}^n\int_{\mathbb{R}^n} & \left(2u\phi_{kjj}\overline{u_{\widetilde k}}
  +4u\phi_{jk}\overline{\partial_{\widetilde j}\partial_{\widetilde k}u}
  +2u\phi_{kjj}\overline{u_{\widetilde k}}\right)\,dx
  \\
  & =\sum_{j,k=1}^n\int_{\mathbb{R}^n}-4
    u_{\widetilde j}\phi_{jk}\overline{u_{\widetilde
  k}}\,dx=-4\int_{\mathbb{R}^n}\nabla_A uD^2\phi\overline{\nabla_A u}\,dx.
  \end{split}
  \label{eq.IV}
\end{equation}
For the 4th and 5th term in \eqref{eq.III} we notice that
\begin{equation*}
  \sum_{j,k=1}^n\phi_{jk}\left(A^j_k-A^k_j\right)=0,
\end{equation*}
and integrating by parts we obtain
\begin{equation}
  \begin{split}
  \sum_{j,k=1}^n\int_{\mathbb{R}^n} & \left(2i\phi_j\left(A^j_k-A^k_j\right)_k|u|^2
  +4iu\phi_j\left(A^j_k-A^k_j\right)\overline{u_{\widetilde
  k}}\right)\,dx
  \\
  & =4\Im\sum_{j,k=1}^n\int_{\mathbb{R}^n}u\phi_j
  \left(A^j_k-A^k_j\right)\overline{u_{\widetilde
  k}}\,dx
  \\
  & =4\Im\int_{\mathbb{R}^n}u\phi'B_\tau\cdot\overline{\nabla_{A}u}\,dx,
  \end{split}
  \label{eq.V}
\end{equation}
with $B_\tau$ as in Definition \ref{def.B}.

Collecting \eqref{eq.nuovaV}, \eqref{eq.III}, \eqref{eq.IV},
\eqref{eq.V} we conclude that
\begin{equation}
  \begin{split}
  \langle u,[H,T]u\rangle = &
  4\int_{\mathbb{R}^n}\nabla_A uD^2\phi\overline{\nabla_A u}
  -\int_{\mathbb{R}^n}|u|^2\Delta^2\phi
  \\
  & -2\int_{\mathbb{R}^n}\phi'V_r|u|^2
  +4\Im\int_{\mathbb{R}^n}u\phi'B_\tau\cdot\overline{\nabla_{A}u}.
  \end{split}
  \label{eq.HT2}
\end{equation}
Identities \eqref{eq.2teta2S} and \eqref{eq.HT2} imply
\eqref{eq:identity}.

\begin{remark}\label{rem.reg}
  Notice that, in order to justify all of the above
  computations, it is sufficient to require that the
  solution $u$ belongs to $\HH^{3/2}$
  (recall Assumption (H)); indeed, the
  highest order term is of the form
  \begin{equation*}
    \int\nabla_A^2u\nabla\phi\cdot\overline{\nabla_Au}.
  \end{equation*}
\end{remark}

% section proof_of_the_virial (end)

\section{Choice of the multiplier}\label{sec:multiplier}  %(fold)

The precise form of the multiplier $\phi$ will depend on the space
dimension. Writing $r=|x|$, we introduce the radial function
\begin{equation}\label{eq:phi}
  \phi_0(r)=\int_0^r\phi_0'(r)\,dx,
\end{equation}
where
\begin{equation}\label{eq:phi1}
  \phi_0'(r)=
  \begin{cases}
    M+\frac{1}{2n}r-\frac{1}{2n(n+2)}r^3,
    \qquad r\leq1
    \\
    M+\frac{1}{2n}-\frac{1}{2n(n+2)}\cdot\frac1{r^{n-1}},
    \qquad r>1,
  \end{cases}
\end{equation}
for some constant $M\geq1$. Hence we have also
\begin{equation}\label{eq:phi2}
  \phi_0''(r)=
  \begin{cases}
    \frac{1}{2n}-\frac{3}{2n(n+2)}r^2,
    \qquad r\leq1
    \\
    \frac{n-1}{2n(n+2)}\cdot\frac1{r^{n}},
    \qquad r>1.
  \end{cases}
\end{equation}
Observe that both $\phi_0'(r)$ and $\phi_0''(r)$ are positive and
continuous on $[0,+\infty)$. In order to compute
$\Delta^2\phi_0(|x|)$, we start by the laplacian, using the formula
\begin{equation*}
  \Delta\phi_0(r)=r^{1-n}\partial_r(r^{n-1}\phi_0'(r)),
\end{equation*}
which gives
\begin{equation}\label{eq:deltaphi}
  \Delta\phi_0(r)=
  \begin{cases}
    M(n-1)\cdot\frac1r+\frac12-\frac1{2n}r^2,
    \qquad r\leq1
    \\
    M(n-1)\cdot\frac1r+\frac{n-1}{2n}\cdot\frac1r,
    \qquad r>1;
  \end{cases}
\end{equation}
also $\Delta\phi_0(r)$ is continuous on
$[0,+\infty)$. Now we can compute the bi-laplacian using the formula
\begin{equation}\label{eq:bidelta}
  \Delta^2\phi_0(r)=r^{1-n}\partial_r(r^{n-1}\left(\Delta\phi_0\right)'(r)).
\end{equation}
Due to the presence of the function $1/r$ in \eqref{eq:deltaphi},
which is the fundamental solution of the laplacian in dimension
$n=3$, the cases $n\geq4$ and $n=3$ are slighlty different.

{\bf Case $n\geq4$.} By direct computation, from \eqref{eq:deltaphi}
we get
\begin{equation*}
  r^{n-1}\left(\Delta\phi_0\right)'(r)=
  \begin{cases}
    -M(n-1)r^{n-3}-\frac1nr^n,
    \qquad r\leq1
    \\
    -\left(M+\frac1{2n}\right)(n-1)r^{n-3},
    \qquad r>1.
  \end{cases}
\end{equation*}
Observe that $r^{n-1}\left(\Delta\phi_0\right)'(r)$ is discontinuous
at $r=1$, and the jump is given by
\begin{equation*}
 \left(\Delta\phi_0\right)'(1^+)-
 \left(\Delta\phi_0\right)'(1^-)=-\frac{n-3}{2n}.
\end{equation*}
As a consequence, \eqref{eq:bidelta} implies
\begin{equation}\label{eq:bideltaphi}  % eq allineate numero al centro
\begin{split}
  \Delta^2\phi_0(r) = &
  -\left(1+\frac{M(n-1)(n-3)}{r^3}\right)\chi_{[0,1]}
  \\
  & -\left(M+\frac1{2n}\right)(n-1)(n-3)\cdot\frac{1}{r^3}\chi_{[1,+\infty)}
  -\frac{n-3}{2n}\delta_{r=1},
  \quad (n\geq4),
\end{split}
\end{equation}
where $\delta_{r=1}$ is the Dirac measure supported on the unit
sphere of $\mathbb{R}^n$. Notice that $\Delta^2\phi_0$ is negative.

{\bf Case $n=3$.} We rewrite \eqref{eq:deltaphi} as
\begin{equation}\label{eq:deltaphi3}
  \Delta\phi_0(r)=\varphi(r)+\psi(r),
\end{equation}
where
\begin{equation}\label{eq:fund}
  \varphi(r)=2M\cdot\frac1r,
\end{equation}
\begin{equation}\label{eq:psi}
  \psi(r)=
  \begin{cases}
    \frac12-\frac1{6}r^2,
    \qquad r\leq1
    \\
    \frac13\cdot\frac1{r^2},
    \qquad
    r\geq1.
  \end{cases}
\end{equation}
Clearly we have
\begin{equation*}
  \Delta\varphi(r)=-8\pi M\delta_{x=0},
\end{equation*}
\begin{equation*}
  \Delta\psi(r)=-\chi_{[0,1]},
\end{equation*}
where $\delta_{x=0}$ is the Dirac mass at the origin, and hence
\begin{equation}\label{eq:bidelta3}
  \Delta^2\phi_0(r)=-\chi_{[0,1]}-8\pi M\delta_{x=0},
  \qquad (n=3).
\end{equation}
Notice that also in this case the bilaplacian is negative.

We can now choose the multiplier $\phi$, which will be
defined as a suitable scaling of $\phi_0$: for any $R>0$
we set
\begin{equation}\label{eq:phifin}
  \phi_R(r)=R\phi_0\left(\frac{r}{R}\right).
\end{equation}
We have explicitly
\begin{equation}\label{eq:phi1scal}
  \phi_R'(r)=
  \begin{cases}
    M+\frac{1}{2n}\cdot\frac rR-\frac{1}{2n(n+2)}\cdot\frac{r^3}{R^3},
    \qquad r\leq R
    \\
    M+\frac{1}{2n}-\frac{1}{2n(n+2)}\cdot\frac{R^{n-1}}{r^{n-1}},
    \qquad r>R,
  \end{cases}
\end{equation}
\begin{equation}\label{eq:phi2scal}
  \phi_R''(r)=
  \begin{cases}
    \frac1R\left(\frac{1}{2n}-\frac{3}{2n(n+2)}\cdot\frac{r^2}{R^2}\right),
    \qquad r\leq R
    \\
    \frac1R\left(\frac{n-1}{2n(n+2)}\cdot\frac{R^n}{r^{n}}\right),
    \qquad r>R.
  \end{cases}
\end{equation}
Notice that $\phi'_R,\phi''_R$ are strictly positive
and more precisely
\begin{equation}\label{eq:posf1}
  \frac{\phi'}{r}\ge
  \begin{cases}
     \frac Mr+\frac{n-1}{2n(n+2)}\frac{1}{R}
              &\text{if $ r\le R $,}\\
     \frac Mr+\frac{n-1}{2n(n+2)}\frac{R^{n-1}}{r^{n}}
             &\text{if $ r\ge R $,}
  \end{cases}
\end{equation}
while
\begin{equation}\label{eq:posf2}
  {\phi''}\ge
  \begin{cases}
     \frac{n-1}{2n(n+2)}\frac{1}{R}
              &\text{if $ r\le R $,}\\
     \frac{n-1}{2n(n+2)}\frac{R^{n-1}}{r^{n}}
             &\text{if $ r\ge R $.}
  \end{cases}
\end{equation}
Moreover
\begin{equation}\label{eq:sup}
  \sup_{r\geq0}\phi'_R(r)=M+\frac1{2n},
  \qquad
  \sup_{r\geq0}\phi''_R(r)=\frac{1}{2nR}.
\end{equation}
The laplacian is given by
\begin{equation}\label{eq:deltaphifin}
  \Delta\phi_R(r)=
  \begin{cases}
    M(n-1)\cdot\frac1r+\frac1{2R}-\frac1{2n}\cdot\frac{r^2}{R^3},
    \qquad
    r\leq R
    \\
    M(n-1)\cdot\frac1r+\frac{n-1}{2n}\cdot\frac{1}{r},
    \qquad
    r>R
  \end{cases}
\end{equation}
whence in particular the estimate
\begin{equation}\label{eq:supD}
  |\Delta \phi_{R}|\le \frac{M(n-1)}{r}+\frac{1}{2(r \vee R)}.
\end{equation}
Also here the bilaplacian has a different form in the cases $n\geq4$ and
$n=3$. For $n\geq4$ we have
\begin{equation}\label{eq:bideltaphiscal}  % eq allineate numero al centro
\begin{split}
  \Delta^2\phi_R(r) = &
  -\left(\frac1{R^3}+\frac{M(n-1)(n-3)}{r^3}\right)\chi_{[0,R]}
  \\
  & -\left(M+\frac1{2n}\right)(n-1)(n-3)\cdot\frac{1}{r^3}\chi_{[R,+\infty)}
  \\
  & -\frac{n-3}{2n}\cdot\frac{1}{R^2}\delta_{r=R},
  \qquad (n\geq4)
\end{split}
\end{equation}
while in dimension $n=3$ the bilaplacian is given by
\begin{equation}\label{eq:bideltascal3}
  \Delta^2\phi_R(r) = -\frac1{R^3}\chi_{[0,R]}-8\pi M\delta_{x=0},
  \qquad (n=3).
\end{equation}
Observe that in both cases the bilaplacian is negative.
In the following we shall drop the index $R$ and write simply
$\phi$ instead of $\phi_{R}$.

We can now plug these quantities into the identity
\eqref{eq:identity}. Let us consider the Hessian term on the L.H.S.
of \eqref{eq:identity}; using implicit summation over repeated indices, we
can write for a generic vector $v=(v_{1},\dots,v_{n})$
\begin{equation*}
  v \cdot D^{2}\phi \cdot v=
    \phi''(r)\left[\frac{x_{i}v_{i}}{r}\frac{x_{j}v_{j}}{r}
    \right]
    + \frac{\phi'(r)}{r}
    \left[v^{2}-\frac{x_{i}v_{i}}{r}\frac{x_{j}v_{j}}{r}
    \right]
\end{equation*}
with $v^{2}=v_{j}v_{j}$. Hence in particular
\begin{equation*}
  \nabla_{A}u\cdot D^{2}\phi \cdot  \overline{\nabla_{A}u}
  = \phi''\left|\frac{x}{|x|}\cdot \nabla_{A}u
  \right|^{2}
  + \frac{\phi'}{r}
  \left[|\nabla_{A}u|^{2}
  -\left|\frac{x}{|x|}\cdot \nabla_{A}u\right|^{2}
  \right].
\end{equation*}
Then the elementary identity
\begin{equation*}
  v^{2}w^{2}-(v \cdot w)^{2}=
  \sum_{i<j}(v_{i}w_{j}-v_{j}w_{i})^{2}
\end{equation*}
gives, recalling the notations \eqref{eq:radtan}, \eqref{eq:tan},
\begin{equation}\label{eq:hessA}
  \nabla_{A}u\cdot D^{2}\phi \cdot  \overline{\nabla_{A}u}=
  \phi''|\nabla^{R}_{A}u|^{2}+
  \frac{\phi'}{r}|\nabla^{T}_{A}u|^{2}.
\end{equation}
Now the identity \eqref{eq:identity} can be written
\begin{equation}
  \begin{split}
   4\int_{\mathbb{R}^n} & \phi''|\nabla^{r}_{A}u|^{2}\,dx+
    4\int_{\mathbb{R}^n}\frac{\phi'}{r}|\nabla^{T}_{A}u|^{2}\,dx
  -\int_{\mathbb{R}^n}|u|^2\Delta^2\phi\,dx
   -2\int_{\mathbb{R}^n}|u|^2 V_{r}\phi'\,dx
  \\
  &+4\int_{\mathbb{R}^n}
  (B_{\tau}\cdot\overline{\nabla_Au})\ \phi'u
  \,dx  = \frac{d}{dt}
  \Im\int_{\mathbb{R}^n}\overline u\ \nabla_Au\cdot\nabla\phi\,dx.
  \end{split}\label{eq:id3}
\end{equation}
Using \eqref{eq:posf1}, \eqref{eq:posf2} and the expressions for
$\Delta^{2}\phi$, we obtain the following estimates: for $n\ge4$
\begin{equation}\label{eq:4D}
  \begin{split}
    (n-1)(n-3)& M
     \int \frac{|u|^{2}}{r^{3}}dx+
    \frac{n-3}{2n}\frac{1}{R^{2}}\int_{|x|=R}|u|^{2}d \sigma+\\
    &   \qquad+
    \frac{2(n-1)}{n(n+2)}
    \int\frac{R^{n-1} }{(R \vee r)^{n}}|\nabla_{A}u|^{2}dx+
    2M\int \frac{|\nabla^{T}_{A}u|^{2}}{r}dx \le\\
     \le &
    2\int\phi'(V_{r})^{+}|u|^{2}dx+4\left|\int\phi'B_{\tau}\cdot
    \overline{\nabla_{A}u}\,udx\right|
    +\frac{d}{dt}
    \Im\int_{\mathbb{R}^n}\overline u\ \nabla_Au\cdot\nabla\phi\,dx
  \end{split}
\end{equation}
while for $n=3$ we have
\begin{equation}\label{eq:3D}
  \begin{split}
    8\pi M &|u(t,0)|^{2} +
      \frac{1}{R^{3}}\int_{|x|\le R}|u|^{2}dx+
    \frac{4}{15}
    \int\frac{R^{2} }{(R \vee r)^{3}}|\nabla_{A}u|^{2}dx+
    2M\int \frac{|\nabla^{T}_{A}u|^{2}}{r}dx \le\\
     \le &
    2\int\phi'(V_{r})^{+}|u|^{2}dx+4\left|\int\phi'B_{\tau}\cdot
    \overline{\nabla_{A}u}\,udx\right|
    +\frac{d}{dt}
    \Im\int_{\mathbb{R}^n}\overline u\ \nabla_Au\cdot\nabla\phi\,dx.
  \end{split}
\end{equation}

% section choice_of_the_multiplier (end)

\section{Proof of Theorem \ref{the:smoo}}\label{sec:smoo}  %(fold)

By the definition of $H(t)$ we have, for all $|t|\le T$,
\begin{equation}\label{eq:HH1}
  \|v\|_{\HH^{1}}^{2}=
  \|\nabla_{A} v\|^{2}_{L^{2}}+\int V|v|^{2}
  \gtrsim \|\xx^{m/2}v\|_{L^{2}}^{2}
\end{equation}
under our assumptions on $V(t,x)$.
Thus by interpolation we get

\begin{lemma}\label{lem:interp}
  For any $0\le \mu\le m/2$ and any $v\in\HH^{2\mu/m}$ we have
  \begin{equation}\label{eq:interp}
    \|\xx^{\mu} v\|_{L^{2}}\lesssim \|v\|_{\HH^{2\mu/m}}.
  \end{equation}
\end{lemma}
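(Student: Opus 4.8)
The plan is to obtain \eqref{eq:interp} by interpolating between the two endpoint cases $\mu=0$ and $\mu=m/2$. The case $\mu=0$ is trivial, since $\HH^{0}=D(H(t)^{0})=L^{2}$ and $\|\xx^{0}v\|_{L^{2}}=\|v\|_{L^{2}}=\|v\|_{\HH^{0}}$. The case $\mu=m/2$ is exactly the content of \eqref{eq:HH1}: since $V(t,x)\ge c\xx^{m}$ we have $\int V|v|^{2}\ge c\|\xx^{m/2}v\|_{L^{2}}^{2}$, hence $\|\xx^{m/2}v\|_{L^{2}}\lesssim\|v\|_{\HH^{1}}$, which is \eqref{eq:interp} for $2\mu/m=1$. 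All constants here are uniform in $t\in[-T,T]$ because the constants in \eqref{eq:ass1} are.

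Next I would fix $t\in[-T,T]$, write $H=H(t)$, and observe that by Assumption (H) together with \eqref{eq:ass1} the operator $H$ is self-adjoint and, since $-(\nabla-iA)^{2}\ge0$ in the form sense and $V\ge c\xx^{m}\ge c>0$, it satisfies $H\ge c>0$; in particular its complex powers are well defined and the graph norm of $\HH^{s}=D(H^{s/2})$ is equivalent to $\|H^{s/2}\,\cdot\,\|_{L^{2}}$. Then the classical identification of the complex interpolation spaces of domains of fractional powers of a positive self-adjoint operator gives
\[
  [L^{2},\HH^{1}]_{\theta}=[D(H^{0}),D(H^{1/2})]_{\theta}=D(H^{\theta/2})=\HH^{\theta},\qquad 0\le\theta\le1,
\]
with equivalent norms, while weighted $L^{2}$ spaces interpolate in the standard way,
\[
  [L^{2},L^{2}(\xx^{m}dx)]_{\theta}=L^{2}(\xx^{m\theta}dx),\qquad 0\le\theta\le1.
\]
The identity map is bounded from $\HH^{0}=L^{2}$ to $L^{2}(\xx^{0}dx)=L^{2}$ and, by the $\mu=m/2$ endpoint, from $\HH^{1}$ to $L^{2}(\xx^{m}dx)$; interpolating these two bounds yields that the identity is bounded from $\HH^{\theta}$ to $L^{2}(\xx^{m\theta}dx)$, i.e. $\|\xx^{m\theta/2}v\|_{L^{2}}\lesssim\|v\|_{\HH^{\theta}}$ for $0\le\theta\le1$. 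Choosing $\theta=2\mu/m$, which lies in $[0,1]$ precisely because $0\le\mu\le m/2$, gives \eqref{eq:interp}.

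Alternatively, and avoiding the abstract interpolation-of-domains theorem, one may apply Stein's interpolation theorem to the analytic family $T_{z}=\xx^{mz/2}H^{-z/2}$ on the strip $0\le\Re z\le1$: on $\Re z=0$, $T_{iy}=\xx^{imy/2}H^{-iy/2}$ is a composition of a unimodular multiplication with a unitary operator, hence has $L^{2}\to L^{2}$ norm $1$; on $\Re z=1$, using that $\xx^{imy/2}$ commutes with $\xx^{m/2}$ and that $\xx^{m/2}H^{-1/2}$ is bounded on $L^{2}$ by the $\mu=m/2$ endpoint, one gets $\|T_{1+iy}\|_{L^{2}\to L^{2}}\lesssim1$ uniformly in $y$; admissible growth inside the strip is checked on the dense set of spectrally truncated functions. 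Stein interpolation then shows $\xx^{\mu}H^{-\mu/m}$ is bounded on $L^{2}$ for $0\le\mu\le m/2$, and applying this to $w=H^{\mu/m}v$ gives \eqref{eq:interp}. In either route the only point requiring a little care is the interpolation identity for the domains $D(H^{\theta/2})$ (equivalently, the admissible-growth hypothesis in the Stein approach); both are standard once one knows that $H$ is positive self-adjoint with spectrum bounded away from zero, which is guaranteed here by $V\ge c\xx^{m}$, and the uniformity in $t$ is automatic.
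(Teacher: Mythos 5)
Your proof is correct and follows the same route as the paper, which establishes the endpoint bound $\|\xx^{m/2}v\|_{L^{2}}\lesssim\|v\|_{\HH^{1}}$ from \eqref{eq:HH1} and then simply invokes ``by interpolation.'' You have merely supplied the standard details (identification of $[L^{2},D(H^{1/2})]_{\theta}$ with $D(H^{\theta/2})$ for the positive self-adjoint operator $H\ge c>0$, or alternatively Stein interpolation) that the paper leaves implicit.
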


As a consequence, recalling the energy estimates \eqref{eq:est},
we have for any solution $u(t,x)$
\begin{equation}\label{eq:est2}
  \|\xx^{\mu} u\|_{L^{2}}\lesssim \|u\|_{\HH^{2\mu/m}}
  \le C_{T}\|u(0)\||_{\HH^{2\mu/m}}
\end{equation}
provided $0\le \lambda\le m/2$.

Also by interpolation we can prove the following bound which will
be used to estimate the left hand side in \eqref{eq:4D}, \eqref{eq:3D}:

\begin{lemma}\label{lem:h12}
  For any function $\phi\in C^{2}(\mathbb{R}^{n})$, such that
  \begin{equation}\label{eq:assphi}
    |\nabla \phi|+|x|\cdot
    |\Delta \phi|\le K,
  \end{equation}
  the following  inequality holds:
  \begin{equation}\label{eq:h12}
    \left|
    \int \overline f\ \nabla_Ag\cdot\nabla\phi\, dx
    \right|\le C(K)
    \|f\|_{\HH^{1/2}}\|g\|_{\HH^{1/2}}.
  \end{equation}
% where $C=(K_{1}^{2}+2(n-2)^{-1}K_{1}K_{2})^{1/2}$.
  Moreover, if $F(t,x)$ satisfies, for some $\frac{m}{2}\le \lambda\le m$
  \begin{equation}\label{eq:assB}
    \xx^{m/2}|F|+|\nabla F|
    \le K\xx^{\lambda}
  \end{equation}
  we have also
  \begin{equation}\label{eq:hl}
    \left|
    \int F(t,x)\cdot\overline f\ \nabla_Ag\cdot\nabla\phi\, dx
    \right|\le C(K)
    \|f\|_{\HH^{\lambda/m}}\|g\|_{\HH^{\lambda/m}}.
  \end{equation}
\end{lemma}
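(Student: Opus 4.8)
The plan is to prove each of the two estimates by establishing a pair of \emph{endpoint} bilinear bounds and then obtaining the stated bound by bilinear complex interpolation in the scale $\HH^s=D(H(t)^{s/2})$. Since $H(t)$ is (essentially) self-adjoint and $H(t)\ge c>0$ (because $-(\nabla-iA)^2\ge0$ and $V\ge c\xx^m\ge c$), one has $[\HH^{s_0},\HH^{s_1}]_\theta=\HH^{(1-\theta)s_0+\theta s_1}$, and all constants will be uniform in $t\in[-T,T]$ because the $\HH^s$ do not depend on $t$ by Assumption (H). By density I may take $f,g\in C^\infty_0$. Three elementary ingredients will be used repeatedly: $\|\nabla_A v\|_{L^2}\le\|v\|_{\HH^1}$ and $\|\xx^{m/2}v\|_{L^2}\lesssim\|v\|_{\HH^1}$ from \eqref{eq:HH1}; Lemma \ref{lem:interp}; and the diamagnetic inequality $|\nabla|v||\le|\nabla_A v|$ combined with Hardy's inequality (valid for $n\ge3$), which yields $\||x|^{-1}v\|_{L^2}\lesssim\|\nabla_A v\|_{L^2}\le\|v\|_{\HH^1}$.

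For \eqref{eq:h12}: using $|\nabla\phi|\le K$ bounds the integral directly by $K\|f\|_{L^2}\|\nabla_A g\|_{L^2}\le K\|f\|_{L^2}\|g\|_{\HH^1}$. For the symmetric endpoint I write $\nabla_A g=\nabla g-iAg$, integrate by parts the term with $\nabla g$, and use $\nabla\overline f=\overline{\nabla_A f}-iA\overline f$; the magnetic terms cancel and what remains is $-\int(\overline{\nabla_A f}\cdot\nabla\phi)g-\int\overline f\,g\,\Delta\phi$, which is $\lesssim K\|f\|_{\HH^1}\|g\|_{L^2}$ once the factor $|\Delta\phi|\le K|x|^{-1}$ from \eqref{eq:assphi} is absorbed by the Hardy/diamagnetic bound. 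Bilinear interpolation at $\theta=\tfrac12$ between $L^2\times\HH^1$ and $\HH^1\times L^2$, together with $[\HH^0,\HH^1]_{1/2}=\HH^{1/2}$, then gives \eqref{eq:h12}.

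For \eqref{eq:hl} the scheme is the same with $F$ carried along: \eqref{eq:assB} gives $|F|\le K\xx^{\lambda-m/2}$ and $|\nabla F|\le K\xx^{\lambda}$, while Lemma \ref{lem:interp} with $\mu=\lambda-\tfrac m2\in[0,\tfrac m2]$ yields $\|\xx^{\lambda-m/2}v\|_{L^2}\lesssim\|v\|_{\HH^{2\lambda/m-1}}$. One endpoint bound, $\lesssim\|f\|_{\HH^{2\lambda/m-1}}\|g\|_{\HH^1}$, follows from $\|Ff\|_{L^2}\|\nabla_A g\|_{L^2}$ directly; the other, $\lesssim\|f\|_{\HH^1}\|g\|_{\HH^{2\lambda/m-1}}$, comes from integrating by parts as before, which now produces three terms, $-\int(\nabla F\cdot\nabla\phi)\overline f g$, $-\int F(\overline{\nabla_A f}\cdot\nabla\phi)g$ and $-\int F\overline f g\,\Delta\phi$ (the magnetic terms again cancelling), each handled by splitting the weight as $\xx^\lambda=\xx^{m/2}\xx^{\lambda-m/2}$ (with the $|x|^{-1}$ in the last term absorbed as above) and invoking \eqref{eq:HH1} and Lemma \ref{lem:interp}. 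Interpolating at $\theta=\tfrac12$, with $[\HH^{2\lambda/m-1},\HH^1]_{1/2}=\HH^{\lambda/m}$, gives \eqref{eq:hl}. The only step that is more than bookkeeping is the treatment of the $\Delta\phi$ terms: \eqref{eq:assphi} controls only $|x|\,|\Delta\phi|$, so one is forced to absorb an $|x|^{-1}$ singularity, and this — not the magnetic structure, which merely contributes cancelling terms — is where the hypothesis $n\ge3$ genuinely enters, through Hardy's inequality used together with the diamagnetic inequality.
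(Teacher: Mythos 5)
Your proposal is correct and follows essentially the same route as the paper: the same two endpoint bilinear bounds (direct Cauchy--Schwarz for $L^2\times\HH^1$, respectively $\HH^{2\lambda/m-1}\times\HH^1$, and integration by parts plus Hardy and Lemma \ref{lem:interp} for the symmetric endpoint), followed by bilinear interpolation at $\theta=\tfrac12$. The only cosmetic difference is that you reach $\||x|^{-1}f\|_{L^2}\lesssim\|f\|_{\HH^1}$ via the diamagnetic inequality and the classical Hardy inequality, whereas the paper invokes its magnetic Hardy inequality (Theorem \ref{the:hardy}) directly; these are equivalent.
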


\begin{proof}%[of ...]
  Denote by $T(f,g)$ the bilinear operator
  \begin{equation*}
    T(f,g)=\int \overline f\ \nabla_Ag\cdot\nabla\phi\, dx.
  \end{equation*}
  By Cauchy-Schwartz we have immediately
  \begin{equation}\label{eq:bilin1}
    |T(f,g)|\le K \|f\|_{L^{2}}\|g\|_{\HH^{1}}.
  \end{equation}
  On the other hand, after an integration by parts, we have
  \begin{equation*}
    T(f,g)=-\int \overline{\nabla_{A}f}\cdot g \nabla\phi-
      \int \overline{f}g \Delta \phi
  \end{equation*}
  and again by Cauchy-Schwartz we get
  \begin{equation*}
    |T(f,g)|\le
      K\left\|\frac{f}{|x|}\right\|_{L^{2}}\|g\|_{L^{2}}
      +K\|f\|_{\HH^{1}}\|g\|_{L^{2}}.
  \end{equation*}
  Using the magnetic Hardy inequality (Theorem \ref{the:hardy}) this
  implies
  \begin{equation*}
     |T(f,g)|\le
        (K\cdot 2(n-2)^{-1}+K)
        \|f\|_{\HH^{1}}\|g\|_{L^{2}}.
  \end{equation*}
  By interpolation with \eqref{eq:bilin1} we obtain
  \eqref{eq:h12}.

  The proof of \eqref{eq:hl} is similar. Denoting again by $T(f,g)$
  the bilinear form at the left hand side of \eqref{eq:hl}, we have
  \begin{equation}\label{eq:Tfg}
    |T(f,g)|\le K^{2}\|\nabla_{A}g\|_{L^{2}}\|\xx^{\lambda-m/2}f\|_{L^{2}}
    \le K^{2}\|g\|_{\HH^{1}}\|f\|_{\HH^{2 \lambda/m-1 }}
  \end{equation}
  by \eqref{eq:interp}. Integrating by parts we have instead
  \begin{equation*}
    T(f,g)=-\int F \cdot\overline{\nabla_{A}f}\cdot g \nabla\phi -
      \int F \overline{f}g \Delta\phi-
      \int \nabla F \cdot \nabla\phi \overline{f}g=I+II+III.
  \end{equation*}
  The first term is equivalent to $T(g,f)$ and is estimated as above:
  \begin{equation*}
    |I|\le K^{2}\|f\|_{\HH^{1}}\|g\|_{\HH^{2 \lambda/m-1 }}.
  \end{equation*}
  Then, using the assumptions on $F,\phi$ we see that
  \begin{equation*}
    |II|\le
       K^{2}\|\xx^{\lambda-m/2}g\|_{L^{2}}\||x|^{-1}f\|_{L^{2}}\le
       C(K)\|g\|_{\HH^{2 \lambda/m-1 }}\|f\|_{\HH^{1}}
   \end{equation*}
   where we applied again the magnetic Hardy inequality
   \eqref{eq:hardy}. The third term gives
   \begin{equation*}
     |III|\le
       K^{2}\int\xx^{\lambda}|f||g|\le
       \|\xx^{\lambda-m/2}g\|_{L^{2}}\|\xx^{m/2}f\|_{L^{2}}\le
       C(K)\|g\|_{\HH^{2 \lambda/m-1 }}\|f\|_{\HH^{1}}.
   \end{equation*}
   In conclusion we have proved that
   \begin{equation*}
     |T(f,g)|\le
        C(K)\|g\|_{\HH^{2 \lambda/m-1 }}\|f\|_{\HH^{1}}
   \end{equation*}
   and by interpolation with \eqref{eq:Tfg} we obtain \eqref{eq:hl}.
\end{proof}

We can conclude the proof of the Theorem. In the case $n\ge4$,
it is clear that the left hand side of \eqref{eq:4D} is larger than
a multiple of
\begin{equation*}
  \int
  \left[
    \frac{R^{n-1}|\nabla_{A}u|^{2}}{(R \vee r)^{n}}
    +\frac{|\nabla^{T}_{A}u|^{2}}{r}
    +\frac{|u|^{2}}{r^{3}}
  \right]dx
   +\frac{1}{R^{2}}\int_{|x|=R}|u|^{2}d \sigma.
\end{equation*}
Hence, in order to obtain \eqref{eq:smoo4D}, it is sufficient (after
an integration on $[-T,T]$) to prove the following estimates:
\begin{equation}\label{eq:Vest}
  \int_{-T}^{T}
  \int\phi'(V_{r})^{+}|u|^{2}dx dt\le
  C_{T}\|f|_{\HH^{1-1/m}}^{2},
\end{equation}
\begin{equation}\label{eq:Best}
  \left|\int_{-T}^{T}
  \int \phi'B_{\tau}\cdot
  \overline{\nabla_{A}u}\,u dxdt\right|\le
  C_{T}\|f|_{\HH^{\lambda/m}}^{2},
\end{equation}
\begin{equation}\label{eq:Imest}
  \left.
  \Im\int_{\mathbb{R}^n}\overline u\ \nabla_Au\cdot\nabla\phi\,dxdt
  \right|_{-T}^{T}\le
  C_{T}\|f|_{\HH^{1/2}}^{2}.
\end{equation}
In order to prove
the first estimate \eqref{eq:Vest}, we can write using
\eqref{eq:sup}, assumption \eqref{eq:ass2} on $V$ and the
inequality \eqref{eq:interp},
\begin{equation*}
  \int_{-T}^{T}
  \int\phi'(V_{r})^{+}|u|^{2}dx dt
    \le C \int_{-T}^{T}\|u(t)\|_{\HH^{1-1/m}}^{2}dt
    \le C_{T}\|f\|_{\HH^{1-1/m}}^{2}
\end{equation*}
where in the final step we applied the energy estimate
\eqref{eq:est2}. To prove the second estimate \eqref{eq:Best}, it is
sufficient to use \eqref{eq:hl} of Lemma \ref{lem:h12} with the
choice $F=B_{\tau}$, recalling assumptions \eqref{eq:ass2} on
$B_{\tau}$, the bounds \eqref{eq:sup}, \eqref{eq:supD} on $\phi$ and
using again the energy estimate \eqref{eq:est2}. Finally, the third
estimate \eqref{eq:Imest} is exactly \eqref{eq:h12} of Lemma
\ref{lem:h12}. Since $\lambda\le m-1$ and $m\ge2$, this concludes
the proof in the case $n\ge4$.

The proof in the case $n=3$ is completely analogous.

% section proof_of_the_smoo (end)

%%%%%%%%%%%%%%%%%%%%   APPENDIX                              (fold)

\appendix  %% Changes numbering and naming of sections (A, B,...)
\section{Some technical lemmas}\label{sec:appendix}

\begin{theorem}[Magnetic Hardy Inequality]\label{the:hardy}
  Assume $A(x)=(A_{1},\dots,A_{n})$ is in $L^{2}_{\mathrm{loc}}$,
  with values in $\mathbb{R}^{n}$, $n\ge3$. Then for all
  $u$ in the domain of $\nabla_{A}^{2}=(\nabla-iA)^{2}$ the
  following inequality holds:
  \begin{equation}\label{eq:hardy}
    \int \frac{|u|^{2}}{|x|^{2}}dx\le
    \left(\frac{2}{n-2}\right)^{2}
    \int|\nabla_{A}u|^{2}dx.
  \end{equation}
\end{theorem}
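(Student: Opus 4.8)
The plan is to establish \eqref{eq:hardy} by the classical completion-of-square argument, the point being that the diamagnetic structure makes the magnetic potential drop out of the cross term, so that the sharp non-magnetic constant survives unchanged. By a routine density argument it suffices to treat $u\in C^\infty_0(\mathbb{R}^n)$: since $A\in L^2_{\mathrm{loc}}$ one has $\nabla_A u\in L^2$, and since $n\ge3$ the weight $|x|^{-2}$ is locally integrable, so once the inequality is known on $C^\infty_0$ it extends to every $u$ in the domain (or the form domain) of $\nabla_A^2$ by approximation.

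For such $u$ I would start from the trivial bound
\begin{equation*}
  0\le \int_{\mathbb{R}^n}\Bigl|\,\nabla_A u+\frac{n-2}{2}\,\frac{x}{|x|^2}\,u\,\Bigr|^2\,dx
  =\int_{\mathbb{R}^n}|\nabla_A u|^2\,dx
  +(n-2)\,\mathrm{Re}\!\int_{\mathbb{R}^n}\frac{x}{|x|^2}\cdot\overline{\nabla_A u}\;u\,dx
  +\frac{(n-2)^2}{4}\int_{\mathbb{R}^n}\frac{|u|^2}{|x|^2}\,dx .
\end{equation*}
The heart of the matter is the cross term: since $A$ is real-valued, $\mathrm{Re}\bigl(\overline{\nabla_A u}\,u\bigr)=\frac12\nabla|u|^2$ pointwise, so the magnetic potential disappears and the cross term equals $\frac{n-2}{2}\int \frac{x}{|x|^2}\cdot\nabla|u|^2\,dx$. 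Integrating by parts and using $\nabla\cdot\bigl(x/|x|^2\bigr)=(n-2)/|x|^2$ on $\mathbb{R}^n\setminus\{0\}$ (with no distributional contribution at the origin, because $n\ge3$), this becomes $-\frac{(n-2)^2}{2}\int |u|^2/|x|^2\,dx$. Substituting back leaves exactly
\begin{equation*}
  0\le \int_{\mathbb{R}^n}|\nabla_A u|^2\,dx-\frac{(n-2)^2}{4}\int_{\mathbb{R}^n}\frac{|u|^2}{|x|^2}\,dx,
\end{equation*}
which is \eqref{eq:hardy}.

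The only genuinely delicate point is justifying the integration by parts against the singular weight when $u$ is merely in $D(\nabla_A^2)$; this is absorbed into the density reduction above, or, if one prefers a direct argument, one inserts a cutoff vanishing near $x=0$ and lets it increase to $1$, the boundary terms tending to $0$ since $n\ge3$. I would also note a shorter alternative for the reader who is willing to quote a standard fact: the pointwise diamagnetic inequality $|\nabla|u||\le|\nabla_A u|$ reduces \eqref{eq:hardy} immediately to the classical (non-magnetic) Hardy inequality applied to $|u|$, with the same sharp constant $\bigl(2/(n-2)\bigr)^2$; I would give the self-contained completion-of-square proof in the text and relegate this alternative to a remark.
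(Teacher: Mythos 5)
Your argument is correct and is essentially the paper's own proof: the same completion of the square with the multiplier $\alpha x/|x|^{2}u$, the same observation that the realness of $A$ makes the cross term reduce to $\tfrac12\int \nabla|u|^{2}\cdot x/|x|^{2}$, and the same integration by parts, the only cosmetic difference being that you fix $\alpha=(n-2)/2$ from the outset instead of optimizing at the end. The added remarks on density and on the diamagnetic-inequality shortcut are fine but not needed.
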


\begin{proof}%[of ...]
  The proof is similar to the standard one for $A=0$. Indeed,
  for any $\alpha\in \mathbb{R}$ we have
  \begin{equation*}
    0\le\int\left|\nabla_{A}u+ \frac{\alpha x}{|x|^{2}}u\right|^{2}\equiv
    \int|\nabla_{A}u|^{2}+\alpha^{2}\int \frac{|u|^{2}}{|x|^{2}}+
    2 \alpha \mathbb{R}e\int \nabla_{A}u \cdot \frac{x}{|x|^{2}}
    \overline{u}.
  \end{equation*}
  We notice that
  \begin{equation*}
    2\alpha \mathbb{R}e\int \nabla_{A}u \cdot \frac{x}{|x|^{2}}
      \overline{u}=
    2\alpha \mathbb{R}e\int \nabla u \cdot \frac{x}{|x|^{2}}
      \overline{u}=
    \alpha\int \nabla|u|^{2}\cdot\frac{x}{|x|^{2}}
  \end{equation*}
  and integrating by parts we get
  \begin{equation*}
    0\le
    \int|\nabla_{A}u|^{2}+\alpha(\alpha-n+2)\int \frac{|u|^{2}}{|x|^{2}}.
  \end{equation*}
  Choosing $\alpha=(n-2)/2$ we conclude the proof.
\end{proof}

%(end)

%%%%%%%%%%%%%%%%%%%%   REFERENCES                            (fold)

%%% For use with Bibtex:
% \bibliographystyle{plain}
% \bibliography{/Users/piero/Documents/Biblioteca/-bib/bibliodatabase.bib}

%%%  For the final version:

%\printindex

% end of references (end)

%------------------------------------end of body (end)

\end{document}